\tikzset{individu/.style={draw,thick}}
\theoremstyle{plain}
\newtheorem{theorem}{Theorem}[section]
\newtheorem{lemma}[theorem]{Lemma}
\newtheorem{proposition}[theorem]{Proposition}
\theoremstyle{definition}
\theoremstyle{remark}
\newtheorem{remark}[theorem]{Remark}
\numberwithin{equation}{section}
\newcommand \listoftodos{\section*{Todo list} \@starttoc{tdo}}
\newcommand\l@todo[2]
\noindent \textit{#2}, \parbox{10cm}{#1}\par} \makeatother
\newcommand{\N}{\mathbb{N}}
\newcommand{\R}{\mathbb{R}}
\newcommand{\C}{\mathbb{C}}
\newcommand{\calN}{\mathcal{N}}
\newcommand{\ind}[1]{\mathbf{1}_{\left\{#1\right\}}}
\newcommand{\crochet}[1]{{\langle #1 \rangle}}
\renewcommand{\hat}[1]{\widehat{#1}}
\newcommand{\e}{\mathrm{e}}
\newcommand{\dd}{\mathrm{d}}
\newcommand{\egaldistr}{{\overset{(d)}{=}}}
\DeclareMathOperator{\E}{\mathbb{E}}
\renewcommand{\P}{\mathbb{P}}
\newcommand{\calF}{\mathcal{F}}
\newcommand{\calP}{\mathcal{P}}
\newcommand{\x}{\mathbf{x}}
\renewcommand{\rho}{\varrho}
\renewcommand{\epsilon}{\varepsilon}
\title{Biggins' Martingale Convergence for~Branching~L\'evy~Processes}
\author{Jean Bertoin\thanks{Institute of Mathematics, University of Zurich, Switzerland.} \and Bastien Mallein\thanks{LAGA - Institut Galil\'ee, Universit\'e Paris 13, France.}}
\date{\today}
\begin{document}

\maketitle

\begin{abstract}
A branching Lévy process can be seen as the continuous-time version of a branching random walk.  It describes a particle system on the real line in which particles move and reproduce independently in a Poissonian manner. Just as for L\'evy processes, the law of a branching Lévy process is determined by its characteristic triplet $(\sigma^2,a,\Lambda)$, where the branching L\'evy measure $\Lambda$ describes the intensity of the Poisson point process of births and jumps. We establish a version of Biggins' theorem in this framework, that is we provide necessary and sufficient conditions  in terms of the characteristic triplet $(\sigma^2,a,\Lambda)$  for additive martingales  to have a non-degenerate limit.
\end{abstract}

\noindent \emph{\textbf{Keywords:}} Branching L\'evy process, additive martingale, uniform integrability, spinal decomposition.

\medskip

\noindent \emph{\textbf{AMS subject classifications:}}  60G44, 60J80.

\section{Introduction and main result}
\label{sec:introduction}

We start by introducing some notation. We denote by $\x=(x_n)_{ n\geq 1}$ a generic non-increasing sequence in $[-\infty,\infty)$ with $\lim_{n\to \infty}x_n=-\infty$. We view $\x$ as a ranked sequence of positions of particles in $\R$, with the convention that possible particles located at $-\infty$ should be thought of as non-existing (so particles never accumulate in $\R$ and the number of particles may be finite or infinite). We thus often identify $\x$ with a locally finite point measure on $\mathbb{R}$, $\sum \delta_{x_n}$, where, by convention, the possible atoms at $-\infty$ are discarded in this sum. We write $\calP$ for the space of such sequences or point measures. 

Then let $(Z_n)_{n\geq 0}$ be a branching random walk with reproduction law $\pi$, where $\pi$ is some probability measure on $\calP$. In words, this process starts at generation $0$ with a single particle at $0$ and the law of $Z_1$ is given by $\pi$. For every particle at generation $n\geq 1$, say at position $x\in\R$, the sequence of positions of the children of that particle is given by $x+Y$, where $Y$ has the law $\pi$, and to different particles correspond independent copies of $Y$ with law $\pi$. 

A classical assumption made to ensure the well-definition of $(Z_n)$, i.e. that for all $n \in \N$ there are only finitely many particles in the positive half-line, is that there exists $\theta \geq 0$ such that
\begin{equation}
  \label{eqn:integrabilityBRW}
 m(\theta)\coloneqq \int_{\calP} \crochet {\x, \e_{\theta}}\pi(\dd \x) = \E\left( \crochet{Z_1,\e_\theta}\right)<\infty
\end{equation}
where we denote by $\crochet{\x,g} = \sum_{n \geq 1} g(x_n)$ for all measurable nonnegative functions $g$, and $\e_\theta : x \in \R \mapsto \e^{\theta x}$. In particular, we have $\crochet {\x, \e_{\theta}}=\sum \e^{\theta x_n}$. It is common knowledge --and a simple application of the branching property-- that $\E(\crochet{Z_n,\e_\theta}) = m(\theta)^n$ and that the process
$$W_n\coloneqq m(\theta)^{-n}\crochet {Z_n, \e_{\theta}}, \qquad n\geq 0$$
is a nonnegative martingale. The question of whether its terminal value $W_{\infty}$ is non-degenerate has a fundamental importance and was solved by Biggins \cite{Big77} under the additional assumption that
\begin{equation}
  \label{eqn:derivativeBRW}
  m'(\theta)\coloneqq \int_{\calP}\sum x_j\e^{\theta x_j} \pi(\dd \x) \quad \text{ exists and is finite.}
\end{equation}
Note that by \eqref{eqn:integrabilityBRW}, $m$ can be defined, for any $z \in \C$ with $\Re z =\theta$ by
\[
  m(z) \coloneqq \int_{\calP} \crochet{\x, \e_z} \pi(\dd \x) = \E\left( \crochet{Z_1,\e_z} \right),
\]
in which case $m'(\theta)$ is the complex derivative of the function $m$ at point $\theta$, justifying the notation in \eqref{eqn:derivativeBRW}.

Specifically, \cite[Lemma 5]{Big77} states that $\E(W_\infty) = 1$, or equivalently that $(W_n)_{n \geq 0}$ is uniformly integrable, if and only if 
\begin{equation}
  \label{condbigg} \theta m'(\theta)/m(\theta) < \log m(\theta) \ \text{ and }\ \int_{\calP} \crochet {\x, \e_{\theta}}\log^+ \crochet {\x, \e_{\theta}}\,  \pi(\dd \x)<\infty.
\end{equation}
If \eqref{condbigg} does not hold, then $W_{\infty}=0$ a.s. This result has later been improved by Alsmeyer and Iksanov \cite{AlI09}, who obtained a necessary and sufficient condition for the uniform integrability of $(W_n)_{n\geq 0}$ without the additional integrability condition \eqref{eqn:derivativeBRW}.

Recall that, by log-convexity of the function $m$, the first inequality of \eqref{condbigg} entails that $m(0)=\E(\crochet{Z_1,1})>1$, i.e. the Galton-Watson process $(\crochet{Z_n,1})_{n \geq 0}$ is supercritical. In particular, the branching random walk $Z$ survives with positive probability. Biggins \cite{Big77} further pointed out that when the martingale $(W_n)_{n\geq 0}$ is uniformly integrable, the event $\{W_{\infty}>0\}$ actually coincides a.s. with the non-extinction event of the branching random walk.

The purpose of this work is to present a version of Biggins' martingale convergence theorem for branching L\'evy processes, a family of branching processes in continuous time that was recently introduced in \cite{BeM17+}. Branching L\'evy processes bear the same relation to branching random walks as L\'evy processes do to random walks: a branching Lévy process $(Z_t)_{t\geq 0}$ is a point-measure valued process such that for every $r>0$, its discrete-time skeleton $(Z_{nr})_{n\geq 0}$ is a branching random walk. This is a natural extension of the notion of continuous-time branching random walks\footnote{Which can be thought of as branching compound Poisson processes.} as considered by Uchiyama \cite{Uch82}, or the family of branching Lévy processes considered by Kyprianou \cite{Kyp1999}; another subclass also appeared in the framework of so-called compensated-fragmentation processes, see \cite{BeCF}. 

The dynamics of a branching L\'evy process can be described informally as follows. The process starts at time $0$ with a unique particle located at the origin. As time passes, this particle moves according to a certain Lévy process, while making children around its position in a Poissonian fashion. Each of the newborn particles immediately starts an independent copy of this branching Lévy process from its current position. We stress that  a jump of a particle may be correlated with its offspring born at the same time.

The law of a branching Lévy process $(Z_t)_{t\geq 0}$ is characterized by a triplet $(\sigma^2,a,\Lambda)$, where $\sigma^2 \geq 0$, $a \in \R$ and $\Lambda$ is a sigma-finite measure on $\calP$ without atom at $\{(0,-\infty,-\infty,...)\}$, which satisfies
\begin{equation}
  \label{eqn:lambdaLevyProcess}
  \int_\mathcal{P} (1 \wedge x_1^2) \Lambda(\dd \x) < \infty.
\end{equation}
Furthermore, we need another integrability condition for $\Lambda$ that depends on a parameter $\theta\geq 0$;
which is henceforth fixed once for all. Specifically, we request 
\begin{equation}
  \label{eqn:exponentialIntegrability}
  \int_{\calP}\big({\mathbf 1}_{\{x_1>1\}} \e^{\theta x_1}+\sum_{k\geq 2} \e^{\theta x_k} 
  \big)\Lambda(\dd\x) < \infty.
\end{equation}
The term $\sigma^2$ is the Brownian variance coefficient of the trajectory of a particle, $a$ is the drift term, and the branching L\'evy measure $\Lambda$ encodes both the distribution of the jumps of particles, and the branching rate and distribution of their children. The assumption \eqref{eqn:exponentialIntegrability} guarantees the well-definition and the absence of local explosion in the branching Lévy process.

The integrability conditions \eqref{eqn:lambdaLevyProcess} and \eqref{eqn:exponentialIntegrability} enable us to define 
for every $z\in\C$ with $\Re z=\theta$ 
\begin{equation}
  \label{eqn:defineCumulant}
  \kappa(z)\coloneqq \frac{1}{2}\sigma^2z^2 + a z + \int_{\calP} \left( \e^{z x_1} -1-z x_1\ind{|x_1|<1}+ \sum_{k\geq 2} \e^{z x_k}\right) \Lambda(\dd \x).
\end{equation}
We call $\kappa$ the cumulant generating function of $Z_1$; to justify the terminology, recall from Theorem 1.1(ii) in \cite{BeM17+}  that for all $t \geq 0$, we have
\[
  \E\left( \crochet{Z_t, \e_z} \right) = \exp\left( t \kappa(z) \right).
\]
In particular, in terms of the (skeleton) branching random walk $(Z_n)_{n\geq0}$ obtained by sampling $Z$ at integer times, we have the identities
$$m(\theta)=\exp(\kappa(\theta))\ \ \text{and}\ \ m'(\theta)= \kappa'(\theta)\exp(\kappa(\theta)),$$
where $\pi$ is the law of $Z_1$, $m(\theta)$ and $m'(\theta)$ are defined in \eqref{eqn:integrabilityBRW} and \eqref{eqn:derivativeBRW}, and
\begin{equation}\label{kappa'}
\kappa'(\theta) = \sigma^2\theta + a + \int_{\calP} \left(x_1(\e^{\theta x_1}-\ind{|x_1|<1})+ \sum_{k\geq 2}^{\infty} x_k\e^{\theta x_k}\right) \Lambda(\dd \x).
\end{equation}
The well-definition and finiteness of the above integral is equivalent to the well-definition and finiteness of $m'(\theta)$.
Throughout the rest of the article, we assume $\kappa'(\theta)$ in \eqref{kappa'} to be well-defined and finite.

We are now able to state our version of Biggins' martingale convergence theorem in branching Lévy processes settings.
\begin{theorem}
\label{T1}
Let $(Z_t)_{t\geq 0}$ be a branching L\'evy process with characteristic triplet $(\sigma^2,a,\Lambda)$. The martingale $W$ given by
$$W_t\coloneqq \exp(-t\kappa(\theta))\crochet {Z_t, \e_{\theta}} \quad \text{for all }t\geq 0,$$
is uniformly integrable if and only if
\begin{equation}
  \label{cond1}
  \theta \kappa'(\theta)<\kappa(\theta)
\end{equation}
and
\begin{equation}
  \label{cond2}
  \int_{\cal P} \crochet {\x, \e_{\theta}} \left(\log \crochet {\x, \e_{\theta}}-1\right)^+ \Lambda(\dd \x)<\infty.
\end{equation}
Otherwise, the terminal value $W_{\infty}$ equals $0$ a.s.
\end{theorem}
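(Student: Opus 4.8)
The plan is to follow the now-classical spine (change-of-measure) approach to additive-martingale convergence, adapted to the continuous-time branching L\'evy setting. First I would use the martingale $W$ to perform a Girsanov-type change of measure, defining $\hat\P$ on the natural filtration $(\calF_t)_{t\ge0}$ by $\dd\hat\P|_{\calF_t} = W_t\,\dd\P|_{\calF_t}$. The associated spinal decomposition should describe $Z$ under $\hat\P$ as follows: there is a distinguished particle, the \emph{spine} $\xi$, whose trajectory is the L\'evy process obtained from the motion of a single particle by exponentially tilting its law with the weight $\e_\theta$; along $\xi$ subtrees immigrate according to a Poisson point process whose intensity is the size-biased measure $\langle\x,\e_\theta\rangle\,\Lambda(\dd\x)$, each immigrant starting an independent copy of $(Z_t)_{t\ge0}$ from its birth position. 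A direct computation from \eqref{kappa'} shows that under $\hat\P$ the spine satisfies $\xi_t/t\to\kappa'(\theta)$ a.s.

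The measure-theoretic skeleton is standard. Since $\dd\hat\P|_{\calF_t}=W_t\,\dd\P|_{\calF_t}$, the reciprocal $1/W_t$ is a nonnegative supermartingale under $\hat\P$, hence converges $\hat\P$-a.s.; as $W_t>0$ $\hat\P$-a.s. (the spine alone contributes a positive mass), $W_t$ converges $\hat\P$-a.s. to a limit $W_\infty\in(0,\infty]$. General theory then yields the dichotomy I need: $W$ is uniformly integrable (equivalently $\E(W_\infty)=1$, equivalently $\hat\P\ll\P$ on $\calF_\infty$) if and only if $W_\infty<\infty$ $\hat\P$-a.s., while $W_\infty=0$ $\P$-a.s. (equivalently $\hat\P\perp\P$) if and only if $W_\infty=\infty$ $\hat\P$-a.s.; a zero--one law for the spine guarantees that exactly one of the two occurs. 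Everything thus reduces to deciding whether $W_\infty<\infty$ or $W_\infty=\infty$ under $\hat\P$.

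To analyze $W_\infty$ under $\hat\P$ I would use the additive decomposition of $W_t$ along the spine,
\[
W_t \;=\; \zeta_t \;+\; \sum_{u}\e^{\theta\eta_u-\tau_u\kappa(\theta)}\,W^{(u)}_{t-\tau_u},
\qquad \zeta_t:=\e^{\theta\xi_t-t\kappa(\theta)},
\]
where the sum runs over immigrant subtrees $u$ born at time $\tau_u$ at position $\eta_u$, and the $W^{(u)}$ are independent copies of $W$. The spine term $\zeta_t$ governs condition \eqref{cond1}: by the law of large numbers $\zeta_t$ behaves like $\exp\!\big(t(\theta\kappa'(\theta)-\kappa(\theta))\big)$, so $\zeta_t\to0$ precisely when \eqref{cond1} holds, whereas if $\theta\kappa'(\theta)>\kappa(\theta)$ then $\zeta_t\to\infty$, and if $\theta\kappa'(\theta)=\kappa(\theta)$ then $\theta\xi_t-t\kappa(\theta)$ is a centered L\'evy process, so $\zeta_t$ oscillates with $\limsup_t\zeta_t=\infty$; in either failure case the bound $W_t\ge\zeta_t$ forces $W_\infty=\infty$ $\hat\P$-a.s., hence $W_\infty=0$ $\P$-a.s. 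Assuming \eqref{cond1}, the spine term vanishes and letting $t\to\infty$ gives $W_\infty=\sum_u\e^{\theta\eta_u-\tau_u\kappa(\theta)}W^{(u)}_\infty$, so the problem becomes the a.s. finiteness of this sum over the immigration point process.

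The core of the argument, and the step I expect to be hardest, is to prove that under \eqref{cond1} this immigrant sum is a.s. finite if and only if \eqref{cond2} holds. I would view it as a functional of a Poisson point process marked by the immigrant masses $\langle\x,\e_\theta\rangle$ and by independent copies of $W_\infty$, and apply the elementary criterion that a sum of nonnegative independent contributions is a.s. finite iff the integral of $(1\wedge\mathrm{contribution})$ against the intensity is finite. In this Campbell-type computation, the time window during which an immigrant of mass $m=\langle\x,\e_\theta\rangle$ is not yet killed by the factor $\e^{-\tau\kappa(\theta)+\theta\xi_\tau}\approx\e^{-\tau(\kappa(\theta)-\theta\kappa'(\theta))}$ has length of order $\log m$, and integrating the Poissonian immigration over this window produces a weight of order $\log\langle\x,\e_\theta\rangle$ per unit of size-biased mass; against the intensity $\langle\x,\e_\theta\rangle\,\Lambda(\dd\x)$ this is exactly the integral in \eqref{cond2}, the additive constant inside $(\log\langle\x,\e_\theta\rangle-1)^+$ being immaterial for finiteness and absorbed using \eqref{eqn:exponentialIntegrability}. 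The delicate points I would handle with care are: the apparent circularity that the copies $W^{(u)}_\infty$ are distributed as the very quantity under study, resolved by using only the crude bound $\E(W_\infty)\le1$, which makes their contribution tame and isolates the $\log$ term as coming from $\Lambda$ alone; the replacement of $\e^{\theta\xi_\tau}$ by its exponential rate $\e^{\theta\kappa'(\theta)\tau}$, which requires controlling the sub-exponential fluctuations of the spine so that they do not perturb the logarithmic threshold; and, for the converse, a Borel--Cantelli argument showing that when \eqref{cond2} fails infinitely many immigrants each contribute a term bounded away from $0$, forcing $W_\infty=\infty$ and hence $W_\infty=0$ $\P$-a.s.
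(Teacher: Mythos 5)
Your proposal follows the same route as the paper: tilt by $W$, describe the process under $\hat\P$ via a spinal decomposition in which subtrees immigrate along the spine with intensity $\hat\Lambda(\dd \x)=\crochet{\x,\e_{\theta}}\Lambda(\dd \x)$, reduce the theorem to the $\hat\P$-a.s.\ behaviour of $W_t$, and use the spine law of large numbers. The one structural difference --- handling the failure of \eqref{cond1} by oscillation of the centred L\'evy process $\theta\hat\xi_t-t\kappa(\theta)$ instead of quoting Biggins' discrete-time theorem as the paper does --- is essentially fine (modulo the degenerate case where that process is identically zero).

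However, your key Campbell computation has a genuine gap, located exactly at the point that makes this theorem non-trivial, namely the case $\Lambda(\calP)=\infty$. You mark each immigration atom with the mass $\crochet{\x,\e_{\theta}}$ and conclude that the integral test is $\int\crochet{\x,\e_{\theta}}\log^+\crochet{\x,\e_{\theta}}\,\Lambda(\dd\x)<\infty$, asserting that the constant inside $\left(\log\crochet{\x,\e_{\theta}}-1\right)^+$ is ``immaterial for finiteness and absorbed using \eqref{eqn:exponentialIntegrability}''. Both claims fail when $\Lambda$ is infinite. First, what immigrates at an atom is not $\crochet{\x,\e_{\theta}}$ but $\sum_{k\neq *}\e^{\theta x_k}=\crochet{\x,\e_{\theta}}-\e^{\theta x_*}$: the spine keeps the term $\e^{\theta x_*}$ for itself. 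This is not a cosmetic point: by \eqref{eqn:lambdaLevyProcess} and \eqref{eqn:exponentialIntegrability}, an infinite $\Lambda$ must put infinite mass on configurations with $\crochet{\x,\e_{\theta}}$ close to $1$ (typically pure jumps with no offspring), so every bounded time interval a.s.\ contains infinitely many atoms with $\crochet{\x,\e_{\theta}}\geq 1/2$. If each of these immigrated its full mass, then already the conditional expectation of your immigrant sum given the spine data would be $+\infty$ a.s., and your test would wrongly declare $W$ non-uniformly-integrable for \emph{every} infinite $\Lambda$; in reality the immigrant mass of these atoms is zero or negligible, and capturing that cancellation is the whole game. Second, even as a pure integral test, $\int\crochet{\x,\e_{\theta}}\log^+\crochet{\x,\e_{\theta}}\,\Lambda(\dd\x)<\infty$ is \emph{strictly} stronger than \eqref{cond2} for infinite $\Lambda$ --- this is precisely the content of the Remark following Theorem~\ref{T1}. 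For instance, $\Lambda$ supported on single-particle configurations $(x_1,-\infty,\dots)$ with $\Lambda(\dd x_1)=x_1^{-2}\ind{0<x_1<1}\dd x_1$ satisfies \eqref{eqn:lambdaLevyProcess}, \eqref{eqn:exponentialIntegrability}, the finiteness of $\kappa'(\theta)$, and \eqref{cond2}, yet $\int \e^{\theta x_1}\theta x_1\, x_1^{-2}\,\dd x_1=\infty$; no appeal to \eqref{eqn:exponentialIntegrability} can absorb this, since that hypothesis says nothing about $\e^{\theta x_1}$ on $\{0<x_1\leq 1\}$. The paper's proof is engineered around exactly this issue: Lemma~\ref{L3} works with the threshold $\crochet{\x,\e_{\theta}}>\e^{ct}+1$ (the $+1$ being the spine's retained mass), the quantity bounded in Lemma~\ref{L4} is the Poissonian integral of $\sum_{k\neq *}\e^{\theta x_k}$, and its Campbell estimate is closed using \eqref{eqn:exponentialIntegrability} only through $\sum_{k\geq 2}\e^{\theta x_k}$, never integrating $\e^{\theta x_1}$ alone against $\Lambda$. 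To repair your argument you must rerun the $1\wedge(\cdot)$ criterion with the correct immigrant mass $\crochet{\x,\e_{\theta}}-\e^{\theta x_*}$ and then prove that the resulting test is equivalent to \eqref{cond2} under the standing assumptions; this equivalence genuinely uses the finiteness of $\kappa'(\theta)$ together with \eqref{eqn:exponentialIntegrability}, and is not the routine absorption of an additive constant.
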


\begin{remark} When the branching L\'evy measure $\Lambda$ is finite, the integrability condition \eqref{cond2}
is equivalent to the analog of  \eqref{condbigg}, namely 
$$  \int_{\cal P} \crochet {\x, \e_{\theta}}\log^+ \crochet {\x, \e_{\theta}} \Lambda(\dd \x)<\infty.$$
However, when $\Lambda$ is an infinite measure, the inequality above is a strictly stronger requirement than \eqref{cond2}.
\end{remark} 

Of course, the continuous time martingale $W$  is uniformly integrable if and only if this is the case for its discrete time skeleton $(W_n)_{n\geq 0}$, and one might expect that our statement should readily be reduced to Biggins' theorem. Condition \eqref{cond1} should certainly not come as a surprise, since it merely rephrases the first inequality in \eqref{condbigg}. Thus everything boils down to verifying that Condition \eqref{cond2} is equivalent to the $L\log^+ L$ integrability condition in \eqref{condbigg}.

However, the latter does not seems to have a straightforward proof (at least when $\Lambda$ is infinite), the difficulty stems from the fact that there is no simple expression for the law $\pi$ of $Z_1$ in terms of the characteristics $(\sigma^2,a,\Lambda)$. Specifically, we cannot evaluate directly $\E(\crochet{Z_1,\e_{\theta} }\log^+ \crochet{Z_1,\e_{\theta} })$; only expectations of linear functionals of $Z_1$ can be computed explicitly in terms of the characteristics of the branching L\'evy process. 
We shall thus rather establish Theorem~\ref{T1} by an adaptation of the arguments of Lyons \cite{Lyo97} for proving Biggins' martingale convergence for branching random walks, using a version of the celebrated spinal decomposition,
and properties of Poisson random measures.

\begin{remark}
It is well-known that for branching random walks, the law of the terminal value $W_\infty$ is a fix point of a smoothing transform (see e.g. Liu \cite{Liu}), more precisely
\begin{equation}
  \label{eqn:smoothing}
  W_\infty \egaldistr \sum_{j \in \N} \e^{\theta x_j - t \kappa(\theta)} W_\infty^{(j)},
\end{equation}
where $\x = (x_n)$ is a random variable in $\mathcal{P}$ with same law as $Z_1$, and $(W_\infty^{(j)})$ are i.i.d. copies of $W_\infty$ independent of $\x$. As observed above, the law of $Z_1$ cannot be obtained as a simple expression in terms of the characteristic of a branching Lévy process. However,  using classical approximation techniques, one can still get a functional equation for the Laplace transform of $W_\infty$. More precisely, setting $w(y) = \E\left( \exp\left( \e^{-\theta y} W_\infty\right)  \right)$, \eqref{eqn:smoothing} yields
\[
  \forall y \in \R, \quad w(y) = \E\left( \prod_{j \in \N} w(y - x_j + t c_\theta) \right),
\]
with $\x$ sampled again with same law as $Z_1$ and $c_\theta = \kappa(\theta)/\theta$. Using approximation by branching Lévy processes with finite birth intensity, one can then check that $w$ is a solution of the equation
\begin{equation*}
  \frac{1}{2} \sigma^2 w''(y) + (c_\theta - a) w'(y) + \int_{\mathcal{P}} \prod_{j \in \N} w(y-x_j) - w(y) + x_1 \ind{|x_1| < 1} w'(y) \Lambda( \dd \x) = 0,
\end{equation*}
i.e. a traveling wave solution of a generalized growth-fragmentation equation. We refer to Berestycki, Harris  and Kyprianou \cite{BeHaKy} for a detailed study in the framework of homogeneous fragmentations. In particular, observe that the law of $W_\infty$ does not depend on the value of characteristic $a$ of the branching Lévy process.
\end{remark}

In the same vein, recall from Theorem 1 of Biggins \cite{biggins1992} that for $p\in(1,2]$, the martingale $W$ converges in $p$-th mean  whenever 
$$\E(W_1^p)<\infty \quad \text{and} \quad \kappa(p\theta)< p\kappa(\theta).$$
The same approach also enables us to make this criterion explicit in terms of the branching L\'evy measure $\Lambda$.
\begin{proposition}\label{P1} Let $p\in(1,2]$. If
$\kappa(p\theta)< p\kappa(\theta)$,  
\begin{equation}\label{cond3}
\int_{\calP} \crochet{ \x, \e_{\theta}}^p\ind{\crochet{ \x, \e_{\theta}}>2} \Lambda(\dd \x)<\infty,
\end{equation}
and $\kappa(q\theta)<\infty$ for some $q>p$, 
then the martingale $W$ is bounded in $L^p$. 
\end{proposition}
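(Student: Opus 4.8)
The plan is to combine Biggins' $L^p$ criterion with an explicit moment estimate extracted from the infinitesimal generator of the branching Lévy process. Applying Theorem~1 of \cite{biggins1992} to the skeleton branching random walk $(Z_n)_{n\geq 0}$ (recall $m(\theta)=\exp\kappa(\theta)$, so that $\kappa(p\theta)<p\kappa(\theta)$ is exactly Biggins' condition $m(p\theta)<m(\theta)^p$), the assumed inequality $\kappa(p\theta)<p\kappa(\theta)$ reduces everything to verifying the single integrability requirement $\E(W_1^p)<\infty$. Since $W_1=\e^{-\kappa(\theta)}\crochet{Z_1,\e_\theta}$, this is the same as $\E(\crochet{Z_1,\e_\theta}^p)<\infty$. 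Finally, $W_t^p$ is a submartingale, so $t\mapsto\E(W_t^p)$ is non-decreasing and $\sup_t\E(W_t^p)=\lim_n\E(W_n^p)$; hence $L^p$-boundedness of the continuous-time martingale follows from that of its skeleton. Thus the whole content is to bound the $p$-th moment of the linear functional $M_t:=\crochet{Z_t,\e_\theta}$ at $t=1$.

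To this end I would establish a Gronwall-type inequality $\tfrac{\dd}{\dd t}\E(M_t^p)\le C\,\E(M_t^p)$ for a finite constant $C$, by applying Dynkin's formula to the functional $Z\mapsto\crochet{Z,\e_\theta}^p$ and bounding the generator by $C\,M^p$. I would split the Poissonian contribution governed by $\Lambda$ according to whether $\crochet{\x,\e_\theta}\le 2$ or $\crochet{\x,\e_\theta}>2$, and use, for $p\in(1,2]$, $a>0$, $a+b\ge 0$, the elementary inequalities $(a+b)^p-a^p\le p a^{p-1}b+p|b|^p$ (for $b\ge0$) and $\bigl|(a+b)^p-a^p-pa^{p-1}b\bigr|\le C_p\,(a^{p-2}b^2\wedge |b|^p)$, together with the pointwise bounds $\sum_u \e^{2\theta x_u}=\crochet{Z_t,\e_{2\theta}}\le M^2$ and $\sum_u \e^{p\theta x_u}=\crochet{Z_t,\e_{p\theta}}\le M^p$ (valid since $\crochet{Z,\e_{r\theta}}\le\crochet{Z,\e_\theta}^r$ for $r\ge1$). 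The large events $\{\crochet{\x,\e_\theta}>2\}$ carry finite $\Lambda$-mass, and applying the first inequality with $b=\e^{\theta x_u}(\crochet{\x,\e_\theta}-1)$ reduces their contribution to $\int\crochet{\x,\e_\theta}^p\ind{\crochet{\x,\e_\theta}>2}\Lambda(\dd\x)$, finite by \eqref{cond3} (noting $\crochet{\x,\e_\theta}\le 2^{1-p}\crochet{\x,\e_\theta}^p$ there). The small events require the second-order bound and the compensation matching the $-\theta x_1\ind{|x_1|<1}$ term in $\kappa$; the quadratic remainder is then controlled by $\int(1\wedge x_1^2)\Lambda(\dd\x)<\infty$ and $\int\sum_{k\ge2}\e^{\theta x_k}\Lambda(\dd\x)<\infty$ from \eqref{eqn:lambdaLevyProcess}–\eqref{eqn:exponentialIntegrability}, while the leftover linear piece contributes exactly a multiple of $M^p$ through the integrals defining $\kappa(\theta)$. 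The Brownian–drift part is handled likewise. Gronwall and $M_0=1$ then give $\E(M_t^p)\le \e^{Ct}$, so $\E(M_1^p)<\infty$.

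I expect the main obstacle to be the rigorous justification of this formal computation rather than the moment inequality itself: $M_t^p$ is not known a priori to be integrable, and (when $\Lambda$ is infinite) the generator involves unbounded functionals and infinitely many jumps, so Dynkin's formula and the vanishing of the compensated local-martingale part must be justified. I would proceed by a localization/truncation scheme, approximating $\Lambda$ by finite branching Lévy measures with finite birth intensity as in \cite{BeM17+} (for which $Z_t$ has finitely many particles and all moments at play are finite), establishing the differential inequality with the constant $C$ uniform over the truncation — its value only involves the monotone integrals in \eqref{cond3}, \eqref{eqn:lambdaLevyProcess}, \eqref{eqn:exponentialIntegrability} — and passing to the limit by Fatou's lemma. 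This is precisely where the auxiliary hypothesis $\kappa(q\theta)<\infty$ for some $q>p$ is used: it gives $\E(\crochet{Z_t,\e_{q\theta}})=\e^{t\kappa(q\theta)}<\infty$, so the linear functional at the strictly higher level $q\theta>p\theta$ is integrable, which dominates the tails of the large jumps and thereby legitimizes the optional-stopping, dominated-convergence, and interchange-of-limit steps. Note that this condition is not needed for the moment inequality and is not expected to be sharp; it only serves to make the passage to the limit clean.
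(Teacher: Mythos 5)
Your proposal is correct in outline but takes a genuinely different route from the paper. Both proofs begin identically, invoking Theorem 1 of \cite{biggins1992} to reduce the statement to $\E(W_1^p)<\infty$. From there the paper changes measure: it writes $\E(W_1^p)=\E(\hat W_1^{p-1})$ with $p-1\in(0,1]$, splits $\hat W_1$ into the spine term plus the conditional expectation $W_1^*$ of the offspring contribution, bounds $W_1^*\leq AB+C$, and controls $A$ by Doob's inequality (this is precisely where $\kappa(q\theta)<\infty$ enters), $B$ by a Campbell-formula exponential-moment computation, and $C$ by the subadditivity $\Vert\cdot\Vert_{1/(p-1)}\leq\Vert\cdot\Vert_1$ together with \eqref{cond3}. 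You instead stay under $\P$ and run a generator/Gronwall argument on $M_t=\crochet{Z_t,\e_\theta}$. Your formal computation does check out: with $b_u=\e^{\theta z_u}(\crochet{\x,\e_\theta}-1)$ and the pointwise bounds $\sum_u \e^{r\theta z_u}\leq M^r$ for $r\geq 1$, the jump part of the generator of $M^p$ is bounded by $CM^p$, where $C$ involves $\int_{\{\crochet{\x,\e_\theta}\leq 2\}}(\crochet{\x,\e_\theta}-1)^2\Lambda(\dd\x)$ (finite by \eqref{eqn:lambdaLevyProcess} and \eqref{eqn:exponentialIntegrability}), the $\kappa(\theta)$-integrand for the compensated linear piece, and $\int_{\{\crochet{\x,\e_\theta}>2\}}\crochet{\x,\e_\theta}^p\Lambda(\dd\x)$ from \eqref{cond3}; your small/large split plays exactly the role of the paper's $B$/$C$ split. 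What your route buys: no spinal decomposition, and in fact a slightly stronger theorem, since the constant $C$ nowhere involves $q$. What it costs: the stochastic-calculus justification (It\^o/Dynkin for an infinite particle system), which the paper avoids entirely by only ever computing first moments of explicit Poissonian functionals under the tilted law.

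One step of your justification, however, is backwards and would fail as stated: the claimed role of $\kappa(q\theta)<\infty$. Finiteness of $\E(\crochet{Z_t,\e_{q\theta}})=\e^{t\kappa(q\theta)}$ cannot ``dominate'' $M_t^p$ or legitimize dominated convergence, because the inequality runs the wrong way: $\crochet{Z_t,\e_{q\theta}}\leq\crochet{Z_t,\e_\theta}^q$, so control of the left-hand side gives no control of $M_t^p$ whatsoever (indeed, the whole point of the proposition is that finiteness of $\kappa$ at higher levels does not by itself yield $L^p$-moments). The clean way to complete your scheme is the one already implicit in your outline, and it needs no such hypothesis: truncate to finite birth intensity as in \cite{BeM17+} (the truncated integrals are dominated by the original ones, so $C$ is uniform in the truncation); for each truncated process stop at $\tau_K=\inf\{t:M_t\geq K\}$, noting that the overshoot satisfies $M_{\tau_K}\leq K\max(1,\crochet{\x,\e_\theta})$, so that \eqref{cond3} together with $\Lambda(\crochet{\x,\e_\theta}>2)<\infty$ makes $\E(M_{t\wedge\tau_K}^p)$ finite and Dynkin's formula legitimate on $[0,t\wedge\tau_K]$; apply Gronwall to the stopped process, then let $K\to\infty$ and undo the truncation by Fatou/monotone convergence. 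Carried out this way your argument never uses $\kappa(q\theta)<\infty$ --- consistent with the paper's own remark that this hypothesis is an artifact of \emph{their} proof --- so the flaw is repairable from within your own plan, but the repair (stopping times, not domination) is genuinely different from what you wrote.
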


\begin{remark} When the branching L\'evy measure $\Lambda$ is finite,  \eqref{cond3}
is equivalent to the simpler
$\int_{\cal P} \crochet{ \x, \e_{\theta}}^p \Lambda(\dd \x)<\infty$. However, when $\Lambda$ is infinite, 
one always has that\footnote{Indeed, for all $\epsilon>0$, \eqref{eqn:lambdaLevyProcess} implies that $\Lambda\left(|x_1|>\epsilon \right) <\infty$ and \eqref{eqn:exponentialIntegrability} that $\Lambda\left(\sum_{j=2}^\infty \e^{\theta x_j} > \epsilon\right) < \infty$, thus $\Lambda\left( \crochet{\x,\e_\theta} \not \in [1-\delta,1+\delta]\right) < \infty$ for all $\delta > 0$.}  $\Lambda(1/2\leq \crochet{ \x, \e_{\theta}}\leq 2)=\infty$, which explains the role of the indicator function in~\eqref{cond3}. The additional assumption that $\kappa(q\theta)<\infty$ for some $q>p$ is also needed in our proof
to bound the contribution of the infinitely many birth events with $\crochet{ \x, \e_{\theta}}\leq 2$.
\end{remark} 

We do not address here the issue of uniform convergence in the variable~$\theta$; see Biggins \cite{biggins1992} for branching random walks, and further Theorem 2.3 in Dadoun \cite{Dad} in the setting of compensated fragmentations. However, as observed in \cite{biggins1992}, Proposition~\ref{P1} is a key step in this direction.
 
The two statements of this Introduction are established in the next section. 

\section{Proofs}
In this section, we start by summarizing the construction of the branching L\'evy process with characteristics $(\sigma^2, a, \Lambda)$ as a particle system, referring to Sections 4 and 5 in \cite{BeM17+} for a detailed account. We shall then present a version of the spinal decomposition tailored for the purpose of this proof, and finally adapt the approach of Lyons \cite{Lyo97} to establish Theorem~\ref{T1} and Proposition~\ref{P1}.

We first consider a Poisson point process $\calN (\dd t, \dd \x)$ on $[0,\infty)\times \calP$ with intensity $\dd t \otimes \Lambda(\dd \x)$, and an independent Brownian motion $(B_t)_{t\geq 0}$. Thanks to the assumptions \eqref{eqn:lambdaLevyProcess} and \eqref{eqn:exponentialIntegrability}, we can define
$$\xi_t\coloneqq \sigma B_t + at+ \int_{[0,t]\times \calP} x_1\ind{|x_1|<1} \calN^{\rm(c)}(\dd s,\dd \x) + \int_{[0,t]\times \calP} x_1\ind{|x_1|\geq 1} \calN(\dd s,\dd \x)$$
for every $t\geq 0$, where the first Poissonian integral is taken in the compensated sense; see e.g. Section 12.1 in Last and Penrose \cite{LastPen}. So $(\xi_t)_{t\geq 0}$ is a L\'evy process with characteristic exponent $\Phi$ given by the L\'evy-Khintchin formula
$$\Phi(r)\coloneqq -\frac{\sigma^2}{2} r^2 + i a r+\int_{\calP}\left(\e^{irx_1}-1-irx_1\ind{|x_1|<1}\right)\Lambda(\dd \x), \quad r\in\R,$$
in the sense that $\E(\exp(ir\xi_t))=\exp(t\Phi(r))$. 

One should view $(\xi_t)_{t\geq 0}$ as describing the trajectory of the initial particle in the process (the Eve particle in the terminology of \cite{BertoinMarkov}). Further, for each atom of $\calN$, say $(t,\x)$, we view $t$ as the time at which  the Eve particle jumps from position $\xi_{t-}$ to $\xi_t = \xi_{t-} + x_1$, while begetting a sequence of children located at $\xi_{t-}+x_2, \xi_{t-}+x_3, \ldots$. Then, using independent copies of $(\calN,B)$, we let in turn each newborn particle evolve (starting from its own birth time and location) and give birth to its own progeny just as the Eve particle, and so on, and so forth. The branching L\'evy process $Z=(Z_t)_{t\geq 0}$ is then obtained by letting $Z_t$ denote the random point measure whose atoms are given by the positions of the particles in the system at time $t$. 

We then introduce the tilted branching L\'evy measure $\hat{\Lambda}$ on $\calP$, defined by
$$  \hat{\Lambda}(\dd\x) \coloneqq \crochet{\x,\e_{\theta}} \Lambda(\dd \x),$$
and point first at the following elementary fact:
\begin{lemma}
\label{L3} If \eqref{cond2} is fulfilled, then it holds for every $c>0$ that
$$\int_{0}^{\infty} \hat \Lambda(\crochet{\x,\e_{\theta}}>\e^{ct}+1)\dd t <\infty;$$
whereas if \eqref{cond2} fails, then it holds for every $c>0$ and $s>0$ that
$$\int_{s}^{\infty} \hat \Lambda(\crochet{\x,\e_{\theta}}>\e^{ct})\dd t =\infty.$$
\end{lemma}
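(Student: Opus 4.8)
The plan is to reduce both assertions to a single Tonelli computation that rewrites the time-integrals as integrals of $\crochet{\x,\e_{\theta}}$ against $\Lambda$, and then to compare the resulting integrands with the one in \eqref{cond2}. Throughout write $V=\crochet{\x,\e_{\theta}}$, so that by the very definition of the tilted measure one has, for any threshold $a$, the identity $\hat\Lambda(V>a)=\int_{\{V>a\}}V\,\Lambda(\dd\x)$.

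First I would treat the convergent case. Since every integrand is nonnegative, Tonelli applies and gives
\[
\int_0^\infty \hat\Lambda(V>\e^{ct}+1)\,\dd t=\int_{\calP} V\left(\int_0^\infty \ind{\e^{ct}<V-1}\,\dd t\right)\Lambda(\dd\x)=\frac{1}{c}\int_{\{V>2\}}V\log(V-1)\,\Lambda(\dd\x),
\]
using that the inner integral equals $\tfrac1c\log(V-1)$ when $V>2$ and vanishes otherwise; note the ``$+1$'' in the threshold is exactly what keeps the domain bounded away from $V=1$. It then remains to check that the right-hand side is finite whenever \eqref{cond2} holds, i.e.\ whenever $\int_{\{V>\e\}}V(\log V-1)\,\Lambda(\dd\x)<\infty$. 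This is a routine comparison: the two integrands are asymptotically equivalent as $V\to\infty$ (since $\log(V-1)/(\log V-1)\to 1$), so on some far tail $\{V>V_0\}$ each is bounded by a fixed multiple of the other; and on the bounded range $\{2<V\le V_0\}$ both contributions are finite because $\Lambda(V>2)<\infty$ by the integrability assumptions \eqref{eqn:lambdaLevyProcess}--\eqref{eqn:exponentialIntegrability} (as recorded in the footnote to Proposition~\ref{P1}) and the integrands are bounded there. In fact this argument yields the full equivalence, but only the stated direction is needed.

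For the divergent case I would run the identical computation with the threshold $\e^{ct}$ and lower limit $s>0$:
\[
\int_s^\infty \hat\Lambda(V>\e^{ct})\,\dd t=\int_{\calP} V\left(\tfrac1c\log V-s\right)^+\Lambda(\dd\x)=\int_{\{V>\e^{cs}\}}V\left(\tfrac1c\log V-s\right)\Lambda(\dd\x),
\]
where the role of $s>0$ is to force the threshold $\e^{cs}>1$, again keeping us off the delicate region near $V=1$. Assuming \eqref{cond2} fails, so that $\int_{\{V>\e\}}V(\log V-1)\,\Lambda(\dd\x)=\infty$, the finiteness of $\Lambda$ on bounded subsets of $\{V>\e\}$ forces the divergence to originate from the tail $V\to\infty$; since $\tfrac1c\log V-s\ge \tfrac{1}{2c}(\log V-1)$ for all large $V$, restricting to a far tail $\{V>V_0\}$ with $V_0>\e^{cs}$ bounds the displayed integral below by a positive multiple of a divergent integral, whence the value $\infty$ for every $c,s>0$.

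The only genuinely delicate point is reconciling the three distinct lower cutoffs ($2$, $\e^{cs}$, and the $\e$ implicit in \eqref{cond2}); this is handled uniformly by the observation $\Lambda(V>1+\delta)<\infty$ for every $\delta>0$, which guarantees that all contributions from bounded values of $V$ are finite and that convergence or divergence is dictated solely by the large-$V$ asymptotics, where all the integrands are comparable up to constants. Everything else is bookkeeping inside Tonelli's theorem.
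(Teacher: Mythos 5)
Your proof is correct and takes essentially the same route as the paper's: Tonelli's theorem to rewrite each time integral as an integral of $\crochet{\x,\e_{\theta}}$ (times a logarithmic factor) against $\Lambda(\dd \x)$, followed by a tail comparison with the integrand of \eqref{cond2} that exploits $\Lambda\left(\crochet{\x,\e_{\theta}}>1+\delta\right)<\infty$ for every $\delta>0$. If anything, your explicit comparison of $\log^+\left(\crochet{\x,\e_{\theta}}-1\right)$ with $\left(\log\crochet{\x,\e_{\theta}}-1\right)^+$ is more careful than the paper, which presents the two resulting integrals as exactly equal when they in fact only agree up to a discrepancy supported on bounded values of $\crochet{\x,\e_{\theta}}$, where finiteness is automatic.
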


\begin{proof} Note first the identities 
\begin{align*}
  \int_{0}^{\infty} \hat \Lambda(\crochet{\x,\e_{\theta}}>\e^t+1)\dd t
  &= \int_{0}^{\infty} \dd t  \int_{\cal P} \Lambda(\dd \x) \crochet{\x,\e_{\theta}} \ind{\crochet{\x,\e_{\theta}}>\e^t+1}\\
  &= \int_{\cal P} \crochet {\x, \e_{\theta}} \left(\log  \crochet {\x, \e_{\theta}}-1\right)^+  \Lambda(\dd \x).
\end{align*}
Since \eqref{eqn:lambdaLevyProcess} and \eqref{eqn:exponentialIntegrability} readily entail $\hat \Lambda(\crochet{\x,\e_{\theta}}>b)<\infty$ for every $b>1$, the first claim follows. The proof for the second is similar.
\end{proof}

We next prepare some material for the spinal decomposition. We write $\P$ for the law of $(Z_t)_{t\geq 0}$, $(\calF_t)_{t\geq 0}$ for its natural filtration, and use the martingale $W=(W_t)_{t\geq 0}$ to introduce the tilted probability measure
\[
  \hat{\P}_{\mid \calF_t} = W_t.\P_{\mid \calF_t}.
\]
We also set 
$$
  \hat{a} \coloneqq a + \theta \sigma^2 +\int_{\calP}\left( \sum_{k\geq 1} x_k\e^{\theta x_k} \ind{|x_k|<1} -x_1\ind{|x_1|<1}\right)\Lambda(\dd \x),
$$
where \eqref{eqn:lambdaLevyProcess} and \eqref{eqn:exponentialIntegrability} ensure that the integral above is well-defined and finite. 
 
Then let $\hat \calN (\dd t, \dd \x)$ be a Poisson point process on $[0,\infty)\times \calP$ with intensity $\dd t \otimes \hat \Lambda(\dd \x)$, and recall that $(B_t)_{t\geq 0}$ denotes an independent Brownian motion. For each atom of $\hat \calN$, say $(t,\x)$, we sample independently of the other atoms an index $n\geq 1$ with probability proportional to $\e^{\theta x_n}$ and denote it by $*$, omitting the dependence in $(t,\x)$ in the notation  for the sake of simplicity. In particular $\P(*=n\mid \hat \calN)=\e^{\theta x_n} /\crochet {\x, \e_{\theta}}$.
Next note, again thanks to  \eqref{eqn:lambdaLevyProcess} and \eqref{eqn:exponentialIntegrability}, that 
$$
  \int_\mathcal{P} \sum_{n\geq 1}\e^{\theta x_n}(1 \wedge x_n^2) \Lambda(\dd \x) < \infty.
$$
This enables us to define the (compensated) Poissonian integrals below and set
$$
  \hat \xi_t\coloneqq \sigma B_t + \hat a t+ \int_{[0,t]\times \calP} x_*\ind{|x_*|<1} \hat\calN^{\rm(c)}(\dd s,\dd \x) + \int_{[0,t]\times \calP} x_*\ind{|x_*|\geq 1} \hat\calN(\dd s,\dd \x)
$$
for $t\geq 0$. Plainly, $\hat \xi$ is another L\'evy process, which is referred to as the spine. 

\begin{lemma} \label{L1}
The characteristic exponent of $\hat \xi$ is given by 
$$\hat\Phi(r)\coloneqq \kappa(\theta+ir)-\kappa(\theta), \qquad r\in\R,$$
and it holds that
$$\lim_{t\to \infty} t^{-1} \hat \xi_t= \kappa'(\theta)\qquad\text{a.s.}$$
\end{lemma}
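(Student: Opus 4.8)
The plan is to compute the full characteristics of the Lévy process $\hat\xi$ by first identifying the effective Lévy measure of its jump part, and then matching the Lévy--Khintchin formula. The jumps of $\hat\xi$ are the marks $x_*$ attached to the atoms $(t,\x)$ of $\hat\calN$, so the object to understand is the image point process $\{(t,x_*)\}$ on $[0,\infty)\times\R$. By the marking theorem for Poisson point processes, this image is again Poissonian, and since an atom $(t,\x)$ of $\hat\calN$ produces the mark $x_n$ with probability $\e^{\theta x_n}/\crochet{\x,\e_{\theta}}$ while $\hat\Lambda(\dd\x)=\crochet{\x,\e_{\theta}}\Lambda(\dd\x)$, the two occurrences of $\crochet{\x,\e_{\theta}}$ cancel. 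Its intensity is thus $\dd t\otimes\hat\mu(\dd y)$, where $\hat\mu$ is the measure on $\R$ characterized by $\int_\R f\,\dd\hat\mu=\int_\calP\sum_{n\geq1}\e^{\theta x_n}f(x_n)\,\Lambda(\dd\x)$ for test functions $f$; the bound $\int_\calP\sum_n\e^{\theta x_n}(1\wedge x_n^2)\Lambda(\dd\x)<\infty$ recorded above guarantees both that $\hat\mu$ is a genuine Lévy measure and that these manipulations are licit.

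With $\hat\mu$ at hand, I would write the characteristic exponent of $\hat\xi$ in the Lévy--Khintchin form $\hat\Phi(r)=-\tfrac{\sigma^2}{2}r^2+i\hat a r+\int_\R(\e^{iry}-1-iry\ind{|y|<1})\hat\mu(\dd y)$, substitute the description of $\hat\mu$, and compare term by term with $\kappa(\theta+ir)-\kappa(\theta)$. Expanding $\tfrac12\sigma^2((\theta+ir)^2-\theta^2)=i\sigma^2\theta r-\tfrac12\sigma^2r^2$ and $a((\theta+ir)-\theta)=iar$ already produces the Gaussian and part of the linear term. The only genuine bookkeeping concerns the compensation of small jumps: in $\kappa$ only the leading jump $x_1$ is compensated, whereas in $\hat\Phi$ every jump with $|y|<1$ is. The resulting discrepancy is a purely imaginary, linear-in-$r$ term $irC$ with $C=\int_\calP(\sum_{k\geq1}x_k\e^{\theta x_k}\ind{|x_k|<1}-x_1\ind{|x_1|<1})\Lambda(\dd\x)$, and this is exactly the extra quantity built into the definition $\hat a=a+\theta\sigma^2+C$. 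Collecting terms then yields $\hat\Phi(r)=\kappa(\theta+ir)-\kappa(\theta)$.

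For the almost sure limit, the plan is to invoke the strong law of large numbers for Lévy processes: as soon as $\hat\xi$ has a finite mean, $t^{-1}\hat\xi_t\to\E(\hat\xi_1)$ almost surely. Finiteness of the mean amounts to $\int_{|y|\geq1}|y|\hat\mu(\dd y)=\int_\calP\sum_n|x_n|\e^{\theta x_n}\ind{|x_n|\geq1}\Lambda(\dd\x)<\infty$, which is part of our standing assumption that $\kappa'(\theta)$ in \eqref{kappa'} is finite. Since the Brownian and compensated small-jump contributions are centered, $\E(\hat\xi_1)=\hat a+\int_{|y|\geq1}y\,\hat\mu(\dd y)$; substituting $\hat a$ and $\hat\mu$ makes the truncations at $|x_k|<1$ and $|x_n|\geq1$ recombine into $\sum_{k\geq1} x_k\e^{\theta x_k}-x_1\ind{|x_1|<1}$, which is precisely the integrand of $\kappa'(\theta)$ in \eqref{kappa'}, whence $\E(\hat\xi_1)=\kappa'(\theta)$. (Alternatively, $\E(\hat\xi_1)=-i\hat\Phi'(0)=\kappa'(\theta)$.)

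The step I expect to require the most care is the identification of $\hat\mu$ via the marking theorem together with the justification that all the Poissonian integrals converge and that the interchanges of summation and integration are valid; all of this rests on the integrability hypotheses \eqref{eqn:lambdaLevyProcess}, \eqref{eqn:exponentialIntegrability} and their consequence $\int_\calP\sum_n\e^{\theta x_n}(1\wedge x_n^2)\Lambda(\dd\x)<\infty$. Once $\hat\mu$ is correctly pinned down, the remainder is the deterministic matching of drift conventions, which the definition of $\hat a$ was manifestly engineered to accommodate.
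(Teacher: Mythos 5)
Your proposal is correct and follows essentially the same route as the paper: the paper's one-line ``Poissonian calculus'' step is exactly your marking-theorem identification of the jump intensity $\sum_{n\geq1}\e^{\theta x_n}\delta_{x_n}\Lambda(\dd\x)$ (with the cancellation of $\crochet{\x,\e_\theta}$), after which both arguments conclude the first claim by substitution and the second by computing $\E(\hat\xi_1)=\kappa'(\theta)$ and invoking the law of large numbers for Lévy processes. The only difference is that you spell out the bookkeeping of compensation conventions and the role of $\hat a$, which the paper leaves implicit.
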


\begin{proof} By Poissonian calculus, we get $\E\left(\exp(ir\hat \xi_t)\right) = \exp (t\hat \Phi(r))$ with 
$$
\hat\Phi(r)= -\frac{\sigma^2}{2} r^2 + i \hat a r+\int_{\calP}\sum_{n\geq1}\e^{\theta x_n}\left(\e^{irx_n}-1-irx_n\ind{|x_n|<1}\right) \Lambda(\dd \x)$$
and the first claim follows readily by substitution. Further, the random variable $\hat \xi_1$ is integrable with expectation
$$\hat a + \int_{\calP}\sum_{n=1}^{\infty}x_n \e^{\theta x_n}\ind{|x_n|\geq 1}\Lambda(\dd \x).$$
Again after substitution, we find $\E(\hat \xi_1)=\kappa'(\theta)$, and we conclude applying the law of large numbers for L\'evy processes that $\hat \xi_t\sim \kappa'(\theta)t$  as $t\to \infty$, a.s.
\end{proof}

We can now provide a description of the spinal decomposition for the branching L\'evy process, which is tailored for our purpose. In this direction, we construct a particle system much in the same way as we did for branching L\'evy processes, except that we use the Poisson point process $\hat \calN$ instead of $\calN$, and the trajectory $\hat{\xi}$ to define the so-called Eve particle and its offspring. Specifically, for each atom, say $(t,\x)$, of $\hat \calN$, we view $t$ as the time when the spine jumps to position $\hat{\xi}_{t-} + x_*$, while giving birth to a sequence of children located at $\hat\xi_{t-}+x_j$ for all $j \neq *$. Each of the newborn particles immediately starts an independent copy of the original branching Lévy process $Z$ from its current position. Writing $\hat Z_t$ for the random point measure whose atoms are given by the positions of the particles in the system at time $t$, we are now able to state a simple version of the spine decomposition, and refer to Theorem 5.2 of  Shi and Watson  \cite{ShW17+} for a more detailed version in the setting of compensated fragmentations.
\begin{lemma}
  \label{eqn:spinal}
 The process $\hat Z=(\hat Z_t)_{t\geq 0}$ above has the same law as  $Z$ under $\hat\P$.
\end{lemma}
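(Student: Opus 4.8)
The plan is to follow the change-of-measure strategy of Lyons \cite{Lyo97}, enriching the branching L\'evy process with a distinguished line of descent (the spine) and identifying the tilting by $W_t$ with the explicit spinal construction. Since the lemma only asserts equality of the laws of the point-measure-valued processes, it suffices to compare the marginals on the natural filtration $(\calF_t)_{t\geq0}$, forgetting the spine at the end. First I would set up an enriched probability space carrying both the marked genealogy of $Z$ and a distinguished infinite ray, with filtration $\calG_t \supseteq \calF_t$ recording also the spine up to time $t$. On this space I introduce a reference measure $\bar{\P}$ under which the genealogy has law $\P$ and, given the genealogy, the spine is the ray obtained by following, at each birth/jump event $(s,\x)$ encountered by the current spine particle, the index $*$ with probability $\e^{\theta x_*}/\crochet{\x,\e_{\theta}}$ (exactly the selection rule used to build $\hat\xi$). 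Under $\bar\P$ the spine particle's own trajectory is again a L\'evy process driven by $\calN$, its jumps being the selected displacements $x_*$, and, conditionally on the spine data, the off-spine children hanging off the spine initiate independent copies of $Z$ with law $\P$.

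Next I would introduce the single-spine martingale $\zeta_t$ that combines the Esscher transform of the spine's trajectory with the reweighting of its birth/jump events by the factors $\crochet{\x,\e_{\theta}}$ accumulated along the spine. Tilting $\bar\P$ by $\zeta_t$ on $\calG_t$ yields a measure $\bar\P^{\,\sharp}$, and the two key claims are: (i) the induced law of the spine data under $\bar\P^{\,\sharp}$ is precisely that of the construction, namely the spine moves as $\hat\xi$ (Lemma~\ref{L1}), its birth/jump events form a Poisson point process of intensity $\dd t \otimes \hat\Lambda$, and the index $*$ is still chosen with probability $\e^{\theta x_*}/\crochet{\x,\e_{\theta}}$; and (ii) since $\zeta_t$ depends on $\calG_t$ only through the spine data, the off-spine subtrees remain, conditionally on the spine, independent $\P$-copies of $Z$. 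Together (i)--(ii) identify $\bar\P^{\,\sharp}$ with the law of the enriched spinal construction of $\hat Z$.

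Finally I would verify the many-to-one identity $\E_{\bar\P}(\zeta_t \mid \calF_t) = W_t$, obtained by summing the single-spine tilt over all rays compatible with a given genealogy; the telescoping of the numerators against the weights $\crochet{\x,\e_{\theta}}$ should reconstruct the additive functional $\crochet{Z_t,\e_{\theta}}$. It then follows that the $\calF_t$-marginal of $\bar\P^{\,\sharp}$ equals $W_t \cdot \P_{\mid\calF_t} = \hat\P_{\mid\calF_t}$, so that projecting the construction onto $\calF_t$ gives that the genealogy-marginal of $\hat Z$ is $\hat\P$, that is $\hat Z \egaldistr Z$ under $\hat\P$.

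The main obstacle is claim (i), the Poissonian change of measure, in the infinite-activity regime where $\Lambda$, and hence $\hat\Lambda$, is an infinite (merely sigma-finite) measure. There the Esscher transform of the spine's L\'evy trajectory must be combined with the exponential tilting of a Poisson point process whose intensity is multiplied by the density $\crochet{\x,\e_{\theta}}$; the drift correction is exactly the term $\hat{a}$, and the resulting characteristic exponent is the one computed in Lemma~\ref{L1}. Making this rigorous requires handling the compensated integrals defining $\xi$ and $\hat\xi$ and justifying the tilt by a truncation of the small jumps (atoms with $|x_k|>\epsilon$) followed by a passage to the limit, all controlled by the integrability assumptions \eqref{eqn:lambdaLevyProcess} and \eqref{eqn:exponentialIntegrability} together with the finiteness $\int_\calP \sum_{n\geq1}\e^{\theta x_n}(1\wedge x_n^2)\Lambda(\dd\x)<\infty$ noted before the definition of $\hat\xi$; the same integrability legitimizes the interchange of the tilt with the infinite sum over off-spine children in the many-to-one step. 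An alternative, should the direct computation prove unwieldy, is to establish the statement first for branching L\'evy measures of finite mass---where $\hat Z$ is a branching compound Poisson process with spine and the decomposition is classical---and then to pass to a general $\Lambda$ by approximation, invoking continuity of the laws of $Z$ and $\hat Z$ in the characteristics $(\sigma^2,a,\Lambda)$.
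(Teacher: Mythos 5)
Your scheme --- enriched space, spine selected with probability $\e^{\theta x_*}/\crochet{\x,\e_{\theta}}$, single-spine density $\zeta_t$, then the projection $\E_{\bar\P}(\zeta_t\mid\calF_t)=W_t$ --- is the standard Lyons/Hardy--Harris recipe, and it can in principle be carried out in this setting (this is essentially what Shi and Watson do for compensated fragmentations, which is why the paper cites their Theorem 5.2 as a ``more detailed version''). But as written there is a genuine gap, and you locate it yourself: when $\Lambda$ is infinite, everything you bundle into ``claim (i)'' --- the very definition and martingale property of $\zeta_t$, the equivalence on $[0,t]$ of the Poisson point processes with intensities $\dd t\otimes\Lambda$ and $\dd t\otimes\hat\Lambda$, and the a.s.\ convergence of the spine-selection weights along the infinitely many spine events --- \emph{is} the content of the lemma in the hard case, and you only assert that ``truncation of the small jumps followed by a passage to the limit'' will handle it. Note that the naive product of the factors $\crochet{\x,\e_{\theta}}$ accumulated along the spine does not converge without compensation (infinitely many factors, each close to $1$), so even writing down $\zeta_t$ requires exactly the analysis you postpone; the same applies to the conditional law of the spine given the genealogy in the many-to-one step, which is an infinite product of probabilities.

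The paper avoids this computation altogether, and its route also shows why your fallback would not work as stated. Identities of the form ``law of the construction $=W_t\cdot\P_{\mid \calF_t}$'' do not pass to weak limits under mere ``continuity of the laws in the characteristics'': one needs the densities to converge in $L^1$, i.e.\ uniform integrability. The paper gets this by truncating the \emph{large negative} jumps instead of the small ones: killing each particle (with its descent) when it jumps below $-n$ yields a process $Z^{(n)}$ of \emph{finite birth intensity} (Kyprianou's class, where the spinal decomposition is classical), while preserving a monotone coupling $\crochet{Z^{(n)}_t,\e_{\theta}}\uparrow\crochet{Z_t,\e_{\theta}}$, whence $W^{(n)}_t\to W_t$ in $L^1(\P)$. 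Your small-jump (finite-mass) truncation destroys this monotonicity, since it perturbs every trajectory rather than merely removing particles, and finite total mass is more than is needed --- the Lévy motion of particles may keep its infinite activity. Finally, any truncation argument must face a point absent from your sketch: killing the spinal construction $\hat Z$ below $-n$ is \emph{not} the spinal construction attached to $Z^{(n)}$, because the spine itself may be killed; the paper reconciles the two by conditioning on the spine's survival, $T^{(n)}_*>t$, and then letting $T^{(n)}_*\to\infty$. Without these three ingredients (reduction to finite birth intensity, a coupling giving $L^1$-convergence of the martingales, and the survival conditioning), the approximation step remains a claim rather than a proof.
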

For the reader's convenience, we sketch a proof of this statement.
\begin{proof}
We assume in a first time that $Z$ has a finite birth intensity, in the sense that
\begin{equation}
  \int_\mathcal{P} \sum_{n\geq 2} \ind{x_n > -\infty} \Lambda(\dd \x) < \infty.
\end{equation}
In this case, the branching Lévy process is of the type considered by Kyprianou \cite{Kyp1999}, it can be viewed as a classical Uchiyama-type branching random walk to which independent spatial displacements are superposed. Specifically, each particle moves according to an independent Lévy process until an exponential time of parameter $\Lambda(x_1 = -\infty \text{ or }x_2 > -\infty)$ at which a death or reproduction event occurs. Lemma~\ref{eqn:spinal} is then a simple instance of the spinal decomposition for branching Markov processes, that can be found in \cite{HaH09} (see also \cite[Section 3]{Mai16} for an overview of similar results).

To treat the general case, we use the observation made in \cite[Section 5]{BeM17+} that any branching Lévy process can be constructed as the increasing limit of branching Lévy processes with finite birth intensity. Specifically,  for any $n \in \N$ and $\x \in \mathcal{P}$, we set
\[
  \pi_n(\x) = (x_j - \infty \ind{x_j<-n}, j \in \N),
\]
that is, $\pi_n(\x)$ is obtained from $\x$ by deleting every particle located in $(-\infty, -n)$.
We denote by $Z^{(n)}$ the branching Lévy process obtained from $Z$ using the image of the point measure $\mathcal{N}$ by $(t,\x) \mapsto (t,\pi_n(\x))$. In words, $Z^{(n)}$ is obtained from $Z$ by killing each particle (of course together with its own descent) at the time it makes a jump smaller than $-n$. We write $\kappa^{(n)}$ for the cumulant generating function of $Z^{(n)}$ and $W^{(n)}$
for the additive martingale 
$$W^{(n)}_t=\exp(-t\kappa^{(n)}(\theta))\crochet{Z^{(n)}_t, \e_{\theta}}.$$

We construct $\hat{Z}^{(n)}$ in a similar way, that is by killing every particle in $\hat Z$ at the time it makes a jump smaller than $-n$. Beware that $\hat{Z}^{(n)}$ is different from the point measure valued process $\hat{Z^{(n)}}$ which is associated the branching L\'evy process $Z^{(n)}$, 
as described earlier in this section. Nevertheless, there is a simple connection between the two: if we write
\[
  T^{(n)}_* \coloneqq \inf\{t > 0 : \hat{\xi}_t - \hat{\xi}_{t-} < -n \},
\]
for the time at which the spine particle of $\hat{Z}$ is killed in $\hat{Z}^{(n)}$, then for every $t\geq 0$, the processes
$(\hat{Z^{(n)}_s}: 0\leq s \leq t)$ and $(\hat{Z}^{(n)}_s: 0\leq s \leq t)$ have the same law
conditionally on $T^{(n)}_*>t$.

Indeed, observe that the waiting time $T^{(n)}_*$ can be rewritten
\[
  T^{(n)}_* = \inf\{ t > 0: (t,\x) \text{ atom of } \hat{\calN} \text{ with } x_* < -n \},
\]
hence, conditionally on $T^{(n)}_* > t$, $\hat{\calN}$ is a Poisson point process conditioned on the fact that each atom $(s,\x)$ with $s < t$ satisfies $x_* \geq -n$. By classical Poissonian properties, the image measure of this process by $(\mathrm{Id},\pi_n)$ is a Poisson point process with intensity $\dd t \hat{\Lambda^{(n)}}(\dd\x)$, where $\Lambda^{(n)}$ is the image measure of $\Lambda$ by~$\pi_n$. Moreover, note that for each atom $(s,\x^{(n)})$ of that censored Poisson point process, the mark is sampled at random, and we have $* =j$ with probability $\e^{\theta x^{(n)}_j}/\crochet{\x^{(n)},\e_\theta}$.

The branching Lévy process $Z^{(n)}$ has  finite birth intensity, and we now see from its spinal decomposition that the law of $\hat{Z}^{(n)}$ on $\calF_t$ conditionally on $T^{(n)}_*>t$, is the same as $W^{(n)}_t.\P_{\mid \calF_t}$. Since $\lim_{n\to \infty}T^{(n)}_*=\infty$ a.s., and (by monotone convergence) $\lim_{n\to \infty}W^{(n)}_t=W_t$ in $L^1(\P)$, we easily conclude that the spinal decomposition also holds for $Z$.
\end{proof}

By a classical observation (see Exercice 3.6 in \cite[p. 210]{Durrett}), the proof of Theorem~\ref{T1} amounts to establishing that $\hat{\P}$-a.s.,
$\limsup_{t\to \infty} W_t<\infty$ if the conditions \eqref{cond1} and \eqref{cond2} hold, and $\limsup_{t\to \infty} W_t=\infty$ otherwise. As a consequence of Lemma~\ref{eqn:spinal}, if we write
$$\hat W_t \coloneqq \e^{-t\kappa(\theta)} \crochet {\hat Z_t, \e_{\theta}},$$
then the process $\hat W$ has the same law as $W$ under $\hat \P$, so the next statement entails the second part of Theorem~\ref{T1}.

\begin{lemma}
\label{L2}
If \eqref{cond2} fails, then $\limsup_{t\to\infty} \hat W_t=\infty$ a.s.
\end{lemma}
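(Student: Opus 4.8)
The plan is to argue directly on the spinal construction of $\hat Z$ from Lemma~\ref{eqn:spinal}: since $\hat W$ has the law of $W$ under $\hat\P$, it suffices to produce, almost surely under the spinal law, a sequence of times along which $\hat W$ diverges. The basic mechanism is that each sufficiently large birth event along the spine instantaneously creates a cloud of particles carrying a large $\e_{\theta}$-mass. Concretely, at any atom $(t,\x)$ of the tilted Poisson point process $\hat\calN$, the spine sits at $\hat\xi_{t-}$ immediately before the event, and immediately after it the spine (the $*$-th child) together with the other newborns occupy the positions $\hat\xi_{t-}+x_j$, $j\geq 1$. As $\hat Z_t$ contains at least these particles, we obtain the pointwise lower bound
\[
\hat W_t \;\geq\; \e^{-t\kappa(\theta)}\,\e^{\theta\hat\xi_{t-}}\,\crochet{\x,\e_{\theta}}
\]
valid at every birth time $t$. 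The whole argument then reduces to finding birth times tending to infinity at which the right-hand side blows up.

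First I would control the two factors in this bound. By Lemma~\ref{L1} the spine satisfies $\hat\xi_t/t\to\kappa'(\theta)$ a.s.; the same law of large numbers holds for the left-continuous version of a L\'evy process, so $\hat\xi_{t-}/t\to\kappa'(\theta)$ a.s. as well, and in particular, for any fixed $\epsilon>0$, we have $\theta\hat\xi_{t-}\geq t(\theta\kappa'(\theta)-\epsilon)$ for all large $t$ on an event $\Omega_1$ of full probability. To locate the large birth events I would exploit that \eqref{cond2} fails: the second part of Lemma~\ref{L3} gives, for every $c>0$ and every $s>0$,
\[
\int_s^\infty \hat\Lambda\!\left(\crochet{\x,\e_{\theta}}>\e^{ct}\right)\dd t=\infty.
\]
Because $\hat\calN$ has intensity $\dd t\otimes\hat\Lambda(\dd\x)$ and $\hat\Lambda(\crochet{\x,\e_{\theta}}>b)<\infty$ for every $b>1$ (as noted in the proof of Lemma~\ref{L3}), the atoms $(t,\x)$ of $\hat\calN$ with $\crochet{\x,\e_{\theta}}>\e^{ct}$ form a Poisson process in $t$ whose intensity is integrable on every compact interval away from $0$ but has infinite total mass on each $[s,\infty)$. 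Hence, on an event $\Omega_2$ of full probability, there are infinitely many such atoms and their times $t$ are unbounded.

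Finally I would combine these facts on $\Omega_1\cap\Omega_2$. Fix $\epsilon=\tfrac12$ and choose $c$ large, for instance $c=\lvert\kappa(\theta)-\theta\kappa'(\theta)\rvert+2$, which is legitimate since Lemma~\ref{L3} applies for every $c>0$. Evaluating the lower bound along the (infinitely many, unbounded) birth times $t$ with $\crochet{\x,\e_{\theta}}>\e^{ct}$ yields, for all large such $t$,
\[
\hat W_t \;>\; \e^{-t\kappa(\theta)}\,\e^{t(\theta\kappa'(\theta)-\epsilon)}\,\e^{ct}=\e^{\,t\left(c-\kappa(\theta)+\theta\kappa'(\theta)-\epsilon\right)},
\]
whose exponent is strictly positive by the choice of $c$; thus $\hat W_t\to\infty$ along this subsequence and $\limsup_{t\to\infty}\hat W_t=\infty$ a.s. Observe that condition \eqref{cond1} is never used: taking $c$ large simply overwhelms any, possibly adverse, linear drift of the spine. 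The one point deserving care --- the main, though ultimately harmless, obstacle --- is that the spine's linear growth must hold precisely at the random birth times selected from $\hat\calN$; this causes no difficulty because the law of large numbers of Lemma~\ref{L1} holds on a full-probability event \emph{simultaneously} for all large times, while $\hat\xi_{t-}$ depends only on the atoms strictly preceding $t$, so the two full-probability events may be intersected freely.
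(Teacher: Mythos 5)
Your proof is correct and takes essentially the same approach as the paper's: the same lower bound $\hat W_t \geq \exp(\theta \hat\xi_{t-}-t\kappa(\theta))\crochet{\x,\e_{\theta}}$ at atoms of $\hat\calN$, combined with the second part of Lemma~\ref{L3} to produce an unbounded set of atom times with $\crochet{\x,\e_{\theta}}>\e^{ct}$, and Lemma~\ref{L1} to control the spine, with $c$ chosen large enough to beat the drift. Your extra details (the left-limit version of the law of large numbers, the Poisson-process argument for unboundedness, and the explicit choice of $c$) simply spell out what the paper leaves implicit.
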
 

\begin{proof}
From the construction of $\hat Z$, we observe for every atom $(t,\x)$ of $\hat{\calN}$, by focusing on the spine and its children which are born at time $t$, that there is the bound
$$ \hat W_t  \geq \exp(\theta \hat \xi_{t-} -t\kappa(\theta)) \crochet{\x,\e_{\theta}}.$$
Fix $c>0$ with $-c<\theta \kappa'(\theta)-\kappa(\theta)$, and recall from Lemma~\ref{L3} that the failure of \eqref{cond2} entails that
$$\int_{s}^{\infty} \hat \Lambda(\crochet{\x,\e_{\theta}}>\e^{ct})\dd t=\infty \qquad \text{for every }s>0.$$
This implies that the set of times $t\geq 0$ such that the Poisson point process $\hat \calN$ has an atom $(t,\x)$ with $\crochet{\x,\e_{\theta}}>\e^{ct}$ is unbounded a.s., and an appeal to Lemma~\ref{L1} completes the proof. 
\end{proof}

Since we already know from Biggins' theorem that $W_{\infty}=0$ a.s. when \eqref{cond1} fails, we may now turn our attention to the situation where \eqref{cond1} and \eqref{cond2} both hold, and recall that our goal is then to prove that $\limsup_{t\to\infty} \hat W_t<\infty$ a.s. In this direction, we first  write
\begin{equation}\label{eqn:decWhat}
\hat W_t= \exp(\theta \hat\xi_t-t\kappa(\theta)) + (\hat W_t-\exp(\theta \hat\xi_t-t\kappa(\theta)) ).
\end{equation}
Thanks to Lemma~\ref{L1} and \eqref{cond1}, we know that
$$\lim_{t\to \infty} \exp(\theta \hat\xi_t-t\kappa(\theta))=0 \qquad \text{a.s.}$$
We then write $\hat \sigma$ for the sigma-field generated by the Poisson point process $\hat \calN$ and the random indices $*$ which are selected for each of its atoms. Viewing the second term in the right-hand side of \eqref{eqn:decWhat} as the contribution of the descendants of the children of the spine which were born before time $t$,
we get from the spinal decomposition and the martingale property of $W$ for the branching L\'evy process, that there is the identity
\begin{eqnarray}\label{eq:w*}
  W^*_t 
  & \coloneqq &\E\left( \hat W_t-\exp(\theta \hat\xi_t-t\kappa(\theta))  \middle| \hat \sigma\right) \nonumber \\
  & = &\int_{[0,t]\times \calP}\sum_{k\neq *}\exp(\theta (\hat\xi_{s-}+x_k)-s\kappa(\theta)) \hat{\calN}(\dd s, \dd \x) .
\end{eqnarray}
By the conditional Fatou lemma, it now suffices to verify that the process $W^*$ remains bounded a.s. The lemma below thus completes the proof of Theorem~\ref{T1}.

\begin{lemma}
\label{L4}
If \eqref{cond1} and \eqref{cond2} both hold, then $\sup_{t\geq 0}  W^*_t <\infty$ a.s. 
\end{lemma}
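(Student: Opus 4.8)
The plan is to exploit that $W^*$ is non-decreasing in $t$: in \eqref{eq:w*} it is the integral of a nonnegative integrand over $[0,t]\times\calP$, so $\sup_{t\ge0}W^*_t=W^*_\infty$, the same Poissonian integral taken over all of $[0,\infty)\times\calP$, and it suffices to prove $W^*_\infty<\infty$ a.s. The driving mechanism is the downward drift of the spine. Setting $\gamma\coloneqq\kappa(\theta)-\theta\kappa'(\theta)$, condition \eqref{cond1} gives $\gamma>0$, and Lemma~\ref{L1} yields $t^{-1}(\theta\hat\xi_t-t\kappa(\theta))\to-\gamma$ a.s. Since $s\mapsto\theta\hat\xi_{s-}-s(\kappa(\theta)-\gamma/2)$ is right-continuous with left limits and tends to $-\infty$, it is bounded from above, so there is an a.s. finite random $C$ with
\[
\exp(\theta\hat\xi_{s-}-s\kappa(\theta))\le\e^{C}\e^{-(\gamma/2)s}\quad\text{for all }s\ge0.
\]

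Next I would fix $c\in(0,\gamma/2)$ and split the atoms $(s,\x)$ of $\hat\calN$ into \emph{large} ones, with $\crochet{\x,\e_\theta}>\e^{cs}+1$, and \emph{small} ones otherwise. The large atoms are where \eqref{cond2} enters: by Lemma~\ref{L3} their expected number $\int_0^\infty\hat\Lambda(\crochet{\x,\e_\theta}>\e^{cs}+1)\,\dd s$ is finite, so a.s. there are only finitely many of them; since the spine is finite at each time and $\crochet{\x,\e_\theta}<\infty$ for $\hat\Lambda$-a.e. $\x$, each contributes a finite amount, and their total contribution is therefore finite a.s.

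For the small atoms I would not try to bound $\E(W^*_\infty)$ directly, since averaging $\exp(\theta\hat\xi_s)$ would bring in $\kappa(2\theta)$, which may be infinite. Instead I first strip off the spine using the pathwise bound above, dominating the small-atom contribution by $\e^{C}\Sigma$ with
\[
\Sigma\coloneqq\sum_{(s,\x)\ \text{small}}\e^{-(\gamma/2)s}\sum_{k\ne*}\e^{\theta x_k},
\]
a functional of the marked process $\hat\calN$ alone. As $C<\infty$ a.s., it then suffices to show $\E(\Sigma)<\infty$. Writing $L\coloneqq\crochet{\x,\e_\theta}$, the Campbell--Mecke formula (with the mark averaged via $\P(*=n\mid\x)=\e^{\theta x_n}/L$) together with $\hat\Lambda(\dd\x)=L\,\Lambda(\dd\x)$ gives
\[
\E(\Sigma)=\int_0^\infty\e^{-(\gamma/2)s}\int_{\calP}L\,\E_*\Big(\textstyle\sum_{k\ne*}\e^{\theta x_k}\Big)\ind{L\le\e^{cs}+1}\,\Lambda(\dd\x)\,\dd s,
\]
and carrying out the $s$-integral first produces a factor $\tfrac{2}{\gamma}(L-1)^{-\gamma/(2c)}$ on $\{L>2\}$ and $\tfrac{2}{\gamma}$ on $\{L\le2\}$.

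The analysis then splits exactly where the difficulty sits. On $\{L>2\}$ the crude bound $\E_*(\sum_{k\ne*}\e^{\theta x_k})\le L$ is enough: because $\gamma/(2c)>1$ one has $L^2(L-1)^{-\gamma/(2c)}\lesssim L$, and $\int_{\{L>2\}}L\,\Lambda(\dd\x)=\hat\Lambda(L>2)<\infty$. The hard part is $\{L\le2\}$, on which $\Lambda$ carries infinite mass (the atoms with $\crochet{\x,\e_\theta}\approx1$), so that replacing $\sum_{k\ne*}\e^{\theta x_k}$ by $L$ would diverge. The key, and the main obstacle I anticipate, is to show that the non-spine mass is genuinely small there: from
\[
\E_*\Big(\textstyle\sum_{k\ne*}\e^{\theta x_k}\Big)=\frac{2\sum_{n<m}\e^{\theta(x_n+x_m)}}{L}\le\frac{2\e^{\theta x_1}\sum_{m\ge2}\e^{\theta x_m}+\big(\sum_{m\ge2}\e^{\theta x_m}\big)^2}{L},
\]
together with $\e^{\theta x_1}\le L\le2$ and $\sum_{m\ge2}\e^{\theta x_m}\le L\le2$, the integrand on $\{L\le2\}$ is bounded by a constant times $\sum_{m\ge2}\e^{\theta x_m}$, which is $\Lambda$-integrable by \eqref{eqn:exponentialIntegrability}. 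This yields $\E(\Sigma)<\infty$, hence $\Sigma<\infty$ and $\e^{C}\Sigma<\infty$ a.s., which combined with the finite large-atom contribution gives $W^*_\infty<\infty$ a.s.
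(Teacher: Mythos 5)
Your proof is correct and follows essentially the same route as the paper's: reduce to $W^*_\infty$ by monotonicity, use \eqref{cond1} and Lemma~\ref{L1} to get an a.s.\ exponential pathwise bound on the spine factor, dispose of the atoms with $\crochet{\x,\e_{\theta}}>\e^{cs}+1$ via Lemma~\ref{L3} (finitely many a.s., each finite), and bound the remaining atoms in expectation via Campbell's formula and \eqref{eqn:exponentialIntegrability}. The only difference is bookkeeping: you integrate out $s$ first and then split at $\crochet{\x,\e_{\theta}}=2$ (using $\hat\Lambda(\crochet{\x,\e_{\theta}}>2)<\infty$), whereas the paper keeps the time-dependent threshold $\e^{c's}+1$ as the bound on $\crochet{\x,\e_{\theta}}$ and performs the $s$-integral last; both computations rest on the same ingredients and give the same conclusion.
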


\begin{proof}
The process $W^*$ has non-decreasing paths, so we have to check that $W^*_{\infty}<\infty$ a.s. 

Thanks to \eqref{cond2}, we pick $c>0$ sufficiently small so that $\theta\kappa'(\theta)-\kappa(\theta)<-c$, and then, thanks to Lemma~\ref{L1}, we know that the probability of the event 
$$\Omega_b\coloneqq \{\exp(\theta \hat\xi_{s-}-s\kappa(\theta))\leq b\e^{-cs} \text{ for all $s\geq 1$}\}$$
converges to $1$ as $b\to \infty$. Therefore, we conclude that
\[
  \sup_{s \geq 0} \frac{\exp(\theta \hat\xi_{s-}-s\kappa(\theta))}{\e^{-cs}} < \infty, \quad \text{a.s.}
\]
Hence we only need to check the finiteness of the Poissonian integral 
$$ \int_{[0,\infty)\times \calP} \e^{-cs}  \sum_{k\neq *}\e^{\theta x_k} \hat{\calN}(\dd s, \dd \x).$$
 
In this direction, fix $0<c'<c$. Since $\hat{\calN}$ is a Poisson point process with intensity $\dd s \otimes \hat\Lambda(\dd \x)$, it follows from Lemma~\ref{L3} that the set of times $s\geq 0$ such $\hat{\calN}$ has an atom $(s,\x)$ with $\crochet {\x,\e_{\theta}}>\e^{c's}+1$ is finite a.s., and {\it a fortiori} 
$$ \int_{[0,\infty)\times \calP} \e^{-cs}  \sum_{k\neq *}\e^{\theta x_k} \ind{ \crochet {\x,\e_{\theta}}>\e^{c's}+1} \hat{\calN}(\dd s, \dd \x)<\infty \quad \text{a.s.}$$
 
On the other hand, again by Poissonian calculus, 
\begin{align*}
   &\E\left( \int_{[0,\infty)\times \calP} \e^{-cs}  \sum_{k\neq *}\e^{\theta x_k} \ind{ \crochet {\x,\e_{\theta}}\leq \e^{c's}+1} \hat{\calN}(\dd s, \dd \x)\right)\\
  =&  \int_0^{\infty} \dd s \, \e^{-cs} \int_{\calP} \Lambda(\dd \x) \sum_{j\geq 1} \e^{\theta x_j} \sum_{k\neq j} \e^{\theta x_k}\ind{ \crochet {\x,\e_{\theta}}\leq \e^{c's}+1} \\
  \leq & \int_0^{\infty} \dd s \, \e^{-cs} \int_{\calP} \Lambda(\dd \x) \ind{ \crochet {\x,\e_{\theta}}\leq \e^{c's}+1}
\left( \e^{\theta x_1} \sum_{k\geq 2} \e^{\theta x_k} +\sum_{j\geq 2} \e^{\theta x_j} \crochet {\x,\e_{\theta}} \right) \\
  \leq & \int_0^{\infty} \dd s \, \e^{-cs} \int_{\calP} \Lambda(\dd \x)  2 (\e^{c's}+1) \sum_{k\geq 2} \e^{\theta x_k}, 
\end{align*}
where for the first equality, we used  that  the conditional probability given $\x$ that $* =j$ equals  $\e^{\theta x_j}/\crochet{\x,\e_\theta}$, 
and that the Poisson random measure $\hat {\mathcal N}(\dd s, \dd \x)$
 has intensity $\crochet{\x,\e_{\theta}} \dd s \Lambda(\dd x)$. 
By \eqref{eqn:exponentialIntegrability}, the right-hand side is finite, which completes the proof.
\end{proof}

Finally, we turn our attention to the proof of Proposition~\ref{P1}.

\begin{proof}[Proof of Proposition~\ref{P1}] 
Thanks to Theorem 1 of Biggins \cite{biggins1992}, it is enough to check that, under the assumptions of the statement, one has  $\E(W_1^p)<\infty$, or equivalently, that
$$\hat \E(W_1^{p-1})=\E(\hat W_1^{p-1})<\infty.$$ In this direction, we use the decomposition
\eqref{eqn:decWhat} and note first, using Lemma~\ref{L1}, that
\begin{equation}\label{almostdone}
\E\left(\exp((p-1)(\theta \hat \xi_1-\kappa(\theta))\right) = \exp\left(\kappa(p\theta)-p\kappa(\theta)\right)<1.
\end{equation}

Recall  that   $W^*_t$ denotes  the conditional expectation  of
the second term of the sum in the right-hand side of \eqref{eqn:decWhat}  given the sigma-field generated by the Poisson point process $\hat \calN$ and the random indices $*$ which are selected for each of its atoms.
Since $0<p-1<1$, thanks to the conditional version of Jensen's inequality, it suffices to check that $\E((W^*_1)^{p-1})<\infty$. 

In this direction, we use \eqref{eq:w*} and further
distinguish  the atoms $(s,\x)$ of $\hat{\calN}$ depending on whether $\crochet{\x,\e_{\theta}}\leq 2$ or not, and write 
\begin{equation}\label{eq:wstar}
W^*_1\leq AB+C
\end{equation} 
where 
\begin{eqnarray*} A&=& \sup \{\exp((\theta \hat\xi_{s-}-\kappa(\theta)s)) : 0\leq s \leq 1  \},\\
B&=&
 \int_{[0,1]\times \{\crochet{\x,\e_{\theta}}\leq 2\}}   \sum_{i\neq *}\e^{\theta x_i}  \hat{\calN}(\dd s, \dd \x) ,\\
C&=& \int_{[0,1]\times  \{\crochet{\x,\e_{\theta}}> 2\}}  \exp(\theta \hat\xi_{s-}-\kappa(\theta)s) \sum_{i\neq *}\e^{\theta x_i} \hat{\calN}(\dd s, \dd \x).
\end{eqnarray*}

First, it follows from Lemma~\ref{L1}  that the process 
 $$M_s=\exp((p-1)\theta \hat\xi_{s}-(\kappa(p\theta)- \kappa(\theta))s),\qquad s\geq 0$$ is a martingale.
From our assumption  $\kappa(q\theta)<\infty$ for some $q>p$, we further see that  
 $$\E(M_1^{(q-1)/(p-1)})<\infty,$$
and then, from Doob's inequality, that 
$$\E\left( \sup_{0\leq s \leq 1} \exp((q-1)\theta \hat\xi_{s})\right)<\infty.$$ 
This proves that 
\begin{equation} \label{eq:Aq-1}\E(A^{q-1})<\infty.
\end{equation}

We next check that $B$ has a finite exponential moment. 
Observe from a combination of the formula for the Laplace transform of Poissonian integrals and Campbell's formula (see, e.g. Sections 2.2 and 3.3 in \cite{LastPen}), that for every Poisson random measure $N$ and every nonnegative function $f$, there is the identity
$$\E\left(\exp\left(\int f(y)N(\dd y)\right)\right) = \exp\left(\E\left(\int (\e^{f(y)}-1)N(\dd y)\right)\right).$$
This gives
\begin{eqnarray*} 
\log  \E\left( \exp(B)\right) 
&= &\E\left(  \int_{[0,1]\times \{\crochet{\x,\e_{\theta}}\leq 2\}} \left( \exp\left ( \sum_{i\neq *}\e^{\theta x_i}\right ) -1\right ) \hat{\calN}(\dd s, \dd \x)
\right)\\
&\leq  &\e^2\E\left(  \int_{[0,1]\times \{ \crochet{\x,\e_{\theta}}\leq 2\}}  \sum_{i\neq *}\e^{\theta x_i}\hat{\calN}(\dd s, \dd \x)
\right).
\end{eqnarray*}
Since $\hat{\calN}$ is a Poisson random measure with intensity $\dd s \times \crochet{\x,\e_{\theta}}\Lambda(\dd \x)$,
another application of Campbell's formula enables us to express the last quantity 
in the form
\begin{eqnarray*}  & & \e^2 \int_{\{ \crochet{\x,\e_{\theta}}\leq 2\}}  \sum_{k\geq 1}   \e^{\theta x_k} \sum_{j\neq k}   \e^{\theta x_j} \Lambda(\dd \x)\\
&\leq & \e^2 \int_{\{ \crochet{\x,\e_{\theta}}\leq 2\}}\left(   \e^{\theta x_1} \sum_{j\geq 2}   \e^{\theta x_j} 
+ \sum_{k\geq 2}   \e^{\theta x_k} \crochet{\x,\e_{\theta}}\right) \Lambda(\dd \x)\\
&\leq &4 \e^2\int_{\calP}   \sum_{j\geq 2}   \e^{\theta x_j} \Lambda(\dd \x).
\end{eqnarray*}
By \eqref{eqn:exponentialIntegrability} the last quantity is finite. This entails $\E(\exp(B))<\infty$, and 
{\it a fortiori} that $\E(B^{(p-1)(q-1)/(q-p)})<\infty$. We conclude by H\"older's inequality from \eqref{eq:Aq-1} that 
\begin{equation} \label{eq:ABq-1}\E((AB)^{p-1})<\infty.
\end{equation}

Finally, we turn our attention to $C$. 
Since $0<p-1\leq 1$ and $\hat{\calN}(\dd s, \dd \x)$ is a (random) point measure, for every nonnegative process $(H_s)_{s\geq 0}$, the inequality 
$$\left( \int_{[0,1]\times \calP} H_s \hat{\calN}(\dd s, \dd \x)\right)^{p-1}\leq   \int_{[0,1]\times \calP} H_s^{p-1} \hat{\calN}(\dd s, \dd \x)$$
holds, as $\Vert \mathbf{y} \Vert_{1/(p-1)} \leq \Vert \mathbf{y} \Vert_{1}$ for all real-valued sequences $\mathbf{y}$. Hence, there is the inequality
$$
C^{p-1} \leq  \int_{[0,1]\times  \{\crochet{\x,\e_{\theta}}> 2\}}  \exp((p-1)(\theta \hat\xi_{s-}-\kappa(\theta)s)) \left( \sum_{i\neq *}\e^{\theta x_i} \right)^{p-1}\hat{\calN}(\dd s, \dd \x).
$$
The left-continuous process $s\mapsto \exp((p-1)(\theta \hat\xi_{s-}-\kappa(\theta)s))$ is predictable; recall further that
 the conditional probability given $\x$ that $* =k$ equals  $\e^{\theta x_k}/\crochet{\x,\e_\theta}$, 
and that the Poisson point measure $\hat{\calN}(\dd s, \dd \x)$ has intensity $\crochet{\x,\e_{\theta}}\dd s \Lambda(\dd \x)$. We now see that $\E\left( C^{p-1}\right)$ can be bounded from above by
$$
\int_{\{{\crochet{\x,\e_{\theta}}>2}\}} \sum_{k\geq 1} \e^{\theta x_k}\left(\sum_{i\neq k}\e^{\theta x_i}\right)^{p-1} \Lambda(\dd \x) \times \E\left( \int_0^1\e^{(p-1)(\theta \hat\xi_{s-}-\kappa(\theta)s))} \dd s \right).  $$
Finally, recall from \eqref{almostdone} that 
$$\E(\e^{(p-1)(\theta \hat\xi_{s-}-\kappa(\theta)s))}) =\E(\e^{(p-1)(\theta \hat\xi_{s}-\kappa(\theta)s))}) \leq 1 \quad \text{for all }s\geq 0,$$
and therefore
$$\E\left( C^{p-1}\right) \leq \int_{\{{\crochet{\x,\e_{\theta}}>2}\}} {\crochet{\x,\e_{\theta}}}^p \Lambda(\dd \x).
$$
We conclude from \eqref{cond3} that $\E(C^{p-1})<\infty$, and hence, from \eqref{eq:wstar} and \eqref{eq:ABq-1}, that 
$\E((W^*_1)^{p-1})<\infty$. This completes the proof.
\end{proof}

\end{document}